\newtheorem{theorem}{Theorem}[section]
\newtheorem{lemma}[theorem]{Lemma}
\newtheorem{proposition}[theorem]{Proposition}
\newtheorem{example}[theorem]{Example}
\def\<{\langle}
\def\>{\rangle}
\def\a{\alpha}
\def\b{\beta}
\def\g{\gamma}
\def\o{\otimes}
\def\tr{\triangleright}
\def\tl{\triangleleft}
\date{}
\begin{document}
\renewcommand{\baselinestretch}{1.2}
\renewcommand{\arraystretch}{1.0}

\title{\bf The bicrossed products of $H_4$ and $H_8$}

\author {{ Daowei Lu$^{1,2}$, Yan Ning$^2$, Dingguo Wang$^1$\footnote {Corresponding author: Email: dgwang@qfnu.edu.cn.} }\\
{\small $^1$ School of Mathematical Sciences, Qufu Normal University,}\\
{\small Qufu Shandong, 273165, P. R. of China}\\
{\small $^2$ Department of Mathematics, Jining University,}\\
{\small Qufu Shandong, 273155, P. R. of China}\\
}

 \maketitle
\begin{center}
\begin{minipage}{12.cm}

\noindent{\bf Abstract.}
Let $H_4$ and $H_8$ be the Sweedler's and Kac-Paljutkin Hopf algebras, respectively. In this paper we prove that any Hopf algebra which factorizes through $H_8$ and $H_4$ (equivalently, any bicrossed product between the Hopf algebras $H_8$ and $H_4$) must be isomorphic to one of the following four Hopf algebras: $H_8\o H_4,H_{32,1},H_{32,2},H_{32,3}$. The set of all matched pair $(H_8,H_4,\tr,\tl)$ is explicitly described, and then the associated bicrossed products is given by generators and relations.
\\

\noindent{\bf Keywords:}
 Kac-Paljutkin Hopf algebra; Sweedler's Hopf algebra; Bicrossed product; The factorization problem.
\\

 \noindent{\bf  Mathematics Subject Classification:} 16T10,16T05,16S40.
 \end{minipage}
 \end{center}
 \normalsize\vskip1cm

\section{Introduction}\label{sec1}

The factorization problem stemmed from group theory and it was first considered in \cite{Ma} by Maillet. This problem aims at the description
and classification of all groups $G$ which factor through two given groups $N$ and $H$, i.e. $G=NH$, and $N\cap H=\{1\}$. In \cite{Kassel}, the group $X$ is called bicrossed product of $N$ and $G$.  However, although the statement of the problem seems very simple and natural, no major progress has been made so far since we still have not commanded exhaustive methods to handle it. For example, even the description and classification of
groups which factor through two finite cyclic groups is still an open problem.

An important step in dealing with the factorization problem for groups was the bicrossed product construction introduced in the paper \cite{Z} by Zappa; later on, Takeuchi discovered the same construction in the paper \cite{T}, where the terminology bicrossed product was firstly brought up. The main ingredients in this construction are the so-called matched pairs of groups.
Subsequently, S. Majid in \cite{Majid} generalized this notion to the context of Hopf algebras, and considered a more computational approach of the problem.
The present paper is a contribution to the factorization problem for Hopf algebras.

In paper \cite{ABM}, the authors proposed a strategy for classifying bicrossed product of Hopf algebras following the Majid's construction. The method proposed in \cite{ABM} was followed in \cite{B} to classify bicrossed products of two Sweedler's Hopf algebras, in \cite{Kei1,Kei2} to compute the automorphism of Drinfeld doubles of purely non-abelian finite group and quasitriangular structure of the doubles of finite group, respectively; then in \cite{Ago1} to classify bicrossed products of two Taft algebras, and finally in \cite{Ago2} to classify bicrossed products of Taft algebras and group algebras, where the group is a finite cyclic group.

In the 1960's, G.I. Kac and Paljutkin in \cite{KP} discovered a non-commutative, non-cocommutative semisimple Hopf algebra $H_8$ which is 8-dimensional. Later, in \cite{Mas} Masouka showed that there is only one (up to isomorphisms) semisimple Hopf algebra of dimension 8, and presented it under the perspective of biproducts and bicrossed products. Ore extension is an important tool to study Hopf algebras \cite{Pan,WZZ1,WZZ2,WZZ3,XHW}, recently Pansera in \cite{P} constructed $H_8$ from the point of view of Ore extension in the classification of the inner faithful Hopf actions of $H_8$ on the quantum plane.

As we all know, the 4-dimensional Sweedler's Hopf algebra $H_4$ is the simplest non-commutative, non-cocommutative semisimple Hopf algebra. In this paper, we will describe all the bicrossed products between $H_8$ and $H_4$, and prove that any Hopf algebra which factorizes through $H_8$ and $H_4$ must be isomorphic to one of the four Hopf algebras $H_8\o H_4,H_{32,1},H_{32,2},H_{32,3}$ given explicitly in Theorem 5.1.

This paper is organized as follows. In section 1, we will recall the basic definitions and facts needed in our computations. In section 2, we will determine when $H_4$ becomes a left $H_8$-module coalgebra, and in section 3, we will determine when $H_8$ becomes a right $H_4$-module coalgebra. In section 4, we will find the suitable mutual actions between $H_4$ and $H_8$ such that they could make up matched pairs. And the bicrossed products are also given.

Throughout this paper, $k$ will be an arbitrary algebraically closed field of characteristic zero. Unless otherwise
specified, all algebras, coalgebras, bialgebras, Hopf algebras, tensor products and homomorphisms are over $k$.

\section{Preliminaries}
\def\theequation{2.\arabic{equation}}
\setcounter{equation} {0} \hskip\parindent

In this section, we will recall some basic definitions and facts.

Recall that Sweedler's 4-dimensional Hopf algebra, $H_4$, is generated by two elements $G$ and $X$ subjecting to the relations
$$G^2=1,\ \ X^2=0,\ \ XG=-GX.$$
The coalgebra structure and the antipode are given by
\begin{eqnarray*}
&&\Delta(G)=G\o G,\ \ \varepsilon(G)=1,\ \ S(G)=G,\\
&&\Delta(X)=X\o G+1\o X,\ \ \varepsilon(X)=0,\ \ S(X)=GX.
\end{eqnarray*}
It is well known that the set of group-like elements $G(H_4)$ and the set of primitive elements $P_{g,h}(H_4)$ are given as follows
\begin{eqnarray*}
&&G(H_4)=\{1,G\},\ P_{1,1}(H_4)=P_{G,G}(H_4)=\{0\},\\
&&P_{G,1}(H_4)=k(G-1)\oplus kX,\ P_{1,G}(H_4)=k(G-1)\oplus kGX.
\end{eqnarray*}

The Hopf algebra $H_8$ is generated by three elements $g,h$ and $z$ subjecting to the relations
\begin{eqnarray*}
&&g^2=1,\ h^2=1,\ gh=hg,\\
&&z^2=\frac{1}{2}(1+g+h-gh),\ gz=zh,\ hz=zg.
\end{eqnarray*}
The coalgebra structure and antipode are given by
\begin{eqnarray*}
&&\Delta(g)=g\o g,\ \ \varepsilon(g)=1,\ \ S(g)=g,\\
&&\Delta(h)=h\o h,\ \ \varepsilon(h)=1,\ \ S(h)=h,\\
&&\Delta(z)=J(z\o z),\ \varepsilon(z)=1,\ S(z)=z,
\end{eqnarray*}
where $J=\frac{1}{2}(1\o1+g\o 1+1\o h-g\o h).$
Obviously $H_8$ is 8-dimensional with the basis $\{g^ih^jz^k|0\leq i,j,k\leq1, i,j,k\in \mathcal{N}\}$. It is easy to verify that the element $z$ is invertible with $z^4=1$.

A matched pair of Hopf algebras is a quadruple $(A,H,\tr,\tl)$, where $A$ and $H$ are Hopf algebras, $\tr:H\o A\rightarrow A$ and $\tl:H\o A\rightarrow H$ are coalgebra maps such that $A$ is a left $H$-module coalgebra, $H$ is a right $A$-module coalgebra and the following compatible conditions hold:
\begin{align}
&h\tr 1_A=\varepsilon(h)1_A,\ \ 1_H\tl a=\varepsilon(a)1_H,\\
&h\tr(ab)=(h_1\tr a_1)((h_2\tl a_2)\tr b),\\
&(gh)\tl a=(g\tl(h_1\tr a_1))(h_2\tl a_2),\\
&h_1\tl a_1\o h_2\tr a_2=h_2\tl a_2 \o h_1\tr a_1,
\end{align}
for all $a,b\in A,g,h\in H$.

\section{The left $H_8$-module coalgebra structures on $H_4$}
\def\theequation{3.\arabic{equation}}
\setcounter{equation} {0} \hskip\parindent
In this section we will find all the actions $\tr:H_8\o H_4\rightarrow H_4$ such that as a vector space, $H_4$ is made into a left $H_8$-module coalgebra satisfying $x\tr1=\varepsilon(x)1$ for all $x\in H_8$.

First of all, $g\tr1=h\tr1=z\tr1=1.$ Then $g\tr G\in G(H_4)$. We have $g\tr G\neq1$, for otherwise $G=g^2\tr G=1$. Therefore $g\tr G=G$. Similarly $h\tr G=G$.
Since
\begin{align*}
\Delta(z\tr G)&=\frac{1}{2}[z\tr G\o z\tr G+gz\tr G\o z\tr G+z\tr G\o hz\tr G-gz\tr G\o hz\tr G]\\
&=\frac{1}{2}[z\tr G\o z\tr G+zh\tr G\o z\tr G+z\tr G\o zg\tr G-zh\tr G\o zg\tr G]\\
&=z\tr G\o z\tr G.
\end{align*}
Hence $z\tr G\in G(H_4)$. We have $z\tr G\neq1$, for otherwise
$$1=z\tr(z\tr G)=z^2\tr G=\frac{1}{2}(1+g+h-gh)\tr G=G.$$
Therefore $z\tr G=G$.

By computation we have $g\tr X\in P_{G,1}(H_4)$; thus $g\tr X=\a_1(G-1)+\b_1 X$ for any $\a_1,\b_1\in k$. Since the action of $g$ is compatible with $g^2=1$, we have
$$X=g^2\tr X=g\tr(g\tr X)=(1+\b_1)\a_1(G-1)+\b^2_1X.$$
Thus $(1+\b_1)\a_1=0$ and $\b^2_1=1$. Therefore
\begin{equation}\label{A}
g\tr X=\a_1(G-1)- X\ \hbox{or}\ g\tr X=X.
\end{equation}
Similarly for any $\a_2\in k$
\begin{equation}\label{B}
h\tr X=\a_2(G-1)- X\ \hbox{or}\ h\tr X=X.
\end{equation}
Because the actions of $g$ and $h$ on $X$ should be compatible with $gh=hg$, we need to check the actions given in (\ref{A}) and (\ref{B}). For example if $g\tr X=\a_1(G-1)- X,h\tr X=\a_2(G-1)- X$,
$$gh\tr X=g\tr(\a_2(G-1)- X)=\a_2(G-1)-\a_1(G-1)+X,$$
and
$$hg\tr X=h\tr(\a_1(G-1)- X)=\a_1(G-1)-\a_2(G-1)+X.$$
Then $gh=hg$ forces $\a_1=\a_2$. The other three cases could also be checked easily. Now we give the actions of $g,h$ on $G,X$ in the following tables:
 $$\begin{array}{c|ccc}
 \tr^1   &1  &G  &X\\
\hline 1 &1  &G  &X\\
       g &1  &G  &X\\
       h &1  &G  &X\\
      gh &1  &G  &X\\
 \end{array}\ \ \ \
\begin{array}{c|ccc}
 \tr^2   &1  &G  &X\\
\hline 1 &1  &G  &X\\
       g &1  &G  &X\\
       h &1  &G  &\a(G-1)-X\\
      gh &1  &G  &\a(G-1)-X\\
 \end{array}$$

$$\begin{array}{c|ccc}
 \tr^3   &1  &G  &X\\
\hline 1 &1  &G  &X\\
       g &1  &G  &\a(G-1)-X\\
       h &1  &G  &X\\
      gh &1  &G  &\a(G-1)-X\\
 \end{array}\ \ \ \
\begin{array}{c|ccc}
 \tr^4   &1  &G  &X\\
\hline 1 &1  &G  &X\\
       g &1  &G  &\a(G-1)-X\\
       h &1  &G  &\a(G-1)-X\\
      gh &1  &G  &X\\
 \end{array}$$
For the action $z\tr X$, we have
\begin{align*}
\Delta(z\tr X)=&\frac{1}{2}[z\tr X\o G+1\o z\tr X+gz\tr X\o G+1\o z\tr X\\
&+z\tr X\o G+1\o hz\tr X-gz\tr X\o G-1\o hz\tr X]\\
=&z\tr X\o G+1\o z\tr X,
\end{align*}
hence $z\tr X\in P_{G,1}(H_4)$, and $z\tr X=\b(G-1)+\g X$ for $\b,\g\in k$. On one hand,
$$z^2\tr X=z\tr(\b(G-1)+\g X)=\b(1+\g)(G-1)+\g^2 X.$$
On the other hand,
$$z^2\tr X=\frac{1}{2}(1+g+h-gh)\tr X.$$
Therefore the element $z\tr X$ is determined by the actions of $g,h$ on $X$, and we will consider every case.

For the action $\tr^1$, we obtain $\b(1+\g)(G-1)+\g^2 X=X.$ Then
$$z\tr X=X,\ \ \hbox{or}\ \ \ z\tr X=\b(G-1)-X.$$
Straightforward to see that both actions are compatible with the relation
\begin{equation}\label{C}
gz=zh,hz=zg.
\end{equation}

For the action $\tr^2$, we have
$$z\tr X=X,\ \ \hbox{or}\ \ \ z\tr X=\b(G-1)-X.$$
However both actions are not compatible with the relation (\ref{C}). Similarly the action $\tr^3$ does not hold neither.

For the action $\tr^4$, we have
$$\a(G-1)-X=\b(1+\g)(G-1)+\g^2X.$$
Thus $\g=i,\a=(1+i)\b$ or $\g=-i,\a=(1-i)\b$. Since
$$gz\tr X=i[\b(G-1)\pm X]=zh\tr X,$$
the action $\tr^4$ is well defined. So till now we have four actions, which is redenoted by $\tr^1$ and $\tr^2$, as follows
$$\begin{array}{c|ccc}
 \tr^1   &1  &G  &X\\
\hline 1 &1  &G  &X\\
       g &1  &G  &X\\
       h &1  &G  &X\\
       z &1  &G  &X\\
 \end{array}\ \ \ \
 \begin{array}{c|ccc}
 \tr^2   &1  &G  &X\\
\hline 1 &1  &G  &X\\
       g &1  &G  &X\\
       h &1  &G  &X\\
       z &1  &G  &\a(G-1)-X\\
      \end{array}
\begin{array}{c|ccc}
 \tr^3   &1  &G  &X\\
\hline 1 &1  &G  &X\\
       g &1  &G  &\a(G-1)-X\\
       h &1  &G  &\a(G-1)-X\\
       z &1  &G  &\frac{\a}{1+i}(G-1)+iX\\
 \end{array}\ \ \ \
\begin{array}{c|ccc}
 \tr^4   &1  &G  &X\\
\hline 1 &1  &G  &X\\
       g &1  &G  &\a(G-1)-X\\
       h &1  &G  &\a(G-1)-X\\
       z &1  &G  &\frac{\a}{1-i}(G-1)-iX\\
 \end{array}\ \ \ \
$$
Similarly we have the action of $g,h$ and $z$ on $GX$ as follows:
$$\begin{array}{c|c}
 \tr^a     &GX\\
\hline 1   &GX\\
       g   &GX\\
       h  &GX\\
       z   &GX\\
 \end{array}\ \ \ \
 \begin{array}{c|c}
 \tr^{b}     &GX\\
\hline 1   &GX\\
       g   &GX\\
       h   &GX\\
       z  &\b(G-1)-GX\\
      \end{array}\ \ \ \
\begin{array}{c|c}
 \tr^{c}     &GX\\
\hline 1   &GX\\
       g   &\b(G-1)-GX\\
       h   &\b(G-1)-GX\\
       z  &\frac{\b}{1+i}(G-1)+iGX\\
 \end{array}\ \ \ \
\begin{array}{c|c}
 \tr^{d}     &GX\\
\hline 1   &GX\\
       g   &\b(G-1)-GX\\
       h   &\b(G-1)-GX\\
       z   &\frac{\b}{1-i}(G-1)-iGX\\
 \end{array}\ \ \ \
$$

\begin{proposition}
There are 16 kinds of actions of $H_8$ on $H_4$ defined as above.
\end{proposition}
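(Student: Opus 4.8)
The plan is to prove that the enumeration of module-coalgebra actions in the preceding analysis is exhaustive and yields exactly $16$ distinct actions, by showing that the choices of action on $X$ and on $GX$ are \emph{independent}, and that each has exactly four admissible values.

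First I would observe that the whole analysis splits according to how $H_8$ acts on the two separate primitive-element spaces $P_{G,1}(H_4)=k(G-1)\oplus kX$ and $P_{1,G}(H_4)=k(G-1)\oplus kGX$. The action on group-likes is forced: we have shown $g\tr G=h\tr G=z\tr G=G$ and $x\tr 1=\v(x)1$, so there is no freedom there. The remaining data is entirely the action on $X$ together with the action on $GX$. For the action on $X$, the preceding computation already eliminated the incompatible branches ($\tr^2$ and $\tr^3$ of the intermediate tables fail relation (\ref{C})) and retained exactly the four tables labelled $\tr^1,\tr^2,\tr^3,\tr^4$ in the second display, parametrized by $\a\in k$. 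The key remaining point is that the action on $GX$ is governed by an \emph{identical} chain of constraints: since $GX=S(X)^{\,\prime}$ lives in $P_{1,G}(H_4)$, the same reasoning (compatibility of $g,h$ with $g^2=h^2=1$ and $gh=hg$, then determination of $z\tr GX$ from $z^2=\tfrac12(1+g+h-gh)$, then compatibility with $gz=zh,\ hz=zg$) produces exactly the four tables $\tr^a,\tr^b,\tr^c,\tr^d$, parametrized by $\b\in k$.

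Next I would verify that these two sets of choices are genuinely independent, i.e.\ any of the four $X$-tables may be combined with any of the four $GX$-tables to give a well-defined left $H_8$-module coalgebra structure. Since $X$ and $GX$ span complementary summands of the relevant primitive spaces and the coalgebra structure on $H_4$ expresses $\Delta(X)$ and $\Delta(GX)$ in terms of $X$, $GX$, $G$ and $1$ only (with $G,1$ acted upon rigidly), no cross-relation in $H_4$ forces a link between the $\a$-branch and the $\b$-branch. Thus each action is specified by an independent pair (one $X$-table, one $GX$-table), and the counting is multiplicative.

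Finally I would conclude by multiplication: four admissible actions on $X$ times four admissible actions on $GX$ yields $4\times 4=16$ actions in total, which is the claimed count. The main obstacle I anticipate is not the arithmetic but the independence claim: one must make sure that no further compatibility condition (in particular the module-coalgebra axiom applied to products such as $X\cdot GX$ or to $X^2=0$) secretly couples $\a$ and $\b$. I would address this by checking that the module-\emph{coalgebra} structure imposes only the comultiplicative and counit constraints already used (the multiplicative relations $X^2=0$, $XG=-GX$ being relations in the \emph{algebra} $H_4$, which a mere module-coalgebra action need not respect), so that the sixteen combinations are all legitimate and pairwise distinct for generic $\a,\b$.
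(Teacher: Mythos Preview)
Your proposal is correct and follows essentially the same approach as the paper: the paper simply states the proposition as a summary of the preceding enumeration (four tables for the action on $X$, four tables for the action on $GX$ obtained ``similarly''), and your argument supplies the multiplicative count $4\times 4=16$ together with the justification that the $X$- and $GX$-branches are independent because a module-coalgebra action need not respect the algebra relations $X^2=0$, $XG=-GX$. This last point is a useful addition, as the paper leaves the independence implicit.
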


\section{The right $H_4$-module coalgebra structures on $H_8$}
\def\theequation{4.\arabic{equation}}
\setcounter{equation} {0} \hskip\parindent

In this section we will find all the actions $\tl:H_8\o H_4\rightarrow H_8$ such that as a vector space, $H_8$ is made into a right $H_4$-module coalgebra satisfying $1\tl x=\varepsilon(x)1$ for all $x\in H_4$.

\begin{lemma}
\begin{itemize}
  \item [(1)]  $G(H_8)=\{1,g,h,gh\}$.
  \item [(2)]  $P_{g^ih^j,g^mh^n}(H_8)=\a_{ijmn}(g^ih^j-g^mh^n)$ for $0\leq i,j,m,n\leq1$ and $\a_{ijmn}\in k$.
\end{itemize}

\end{lemma}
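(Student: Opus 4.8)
The backbone of my argument would be a coalgebra direct-sum decomposition of $H_8$. Set $A=\mathrm{span}_k\{1,g,h,gh\}$ and $C=\mathrm{span}_k\{z,gz,hz,ghz\}$. First I would check that both are subcoalgebras and that $H_8=A\oplus C$ as coalgebras. For $A$ this is immediate, since its basis consists of group-likes, so $A=k[V_4]$ is the group algebra of the Klein four-group $V_4=\{1,g,h,gh\}$. For $C$ one computes, using $\Delta(z)=J(z\otimes z)$ with $J=\frac12(1\otimes1+g\otimes1+1\otimes h-g\otimes h)$, together with $\Delta(g^ih^jz)=\Delta(g^ih^j)\Delta(z)$ and the relations $gz=zh$, $hz=zg$, that each of $\Delta(z),\Delta(gz),\Delta(hz),\Delta(ghz)$ lies in $C\otimes C$. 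Hence $\Delta(A)\subseteq A\otimes A$ and $\Delta(C)\subseteq C\otimes C$, so that $H_8\otimes H_8$ splits into the four blocks $A\otimes A$, $A\otimes C$, $C\otimes A$, $C\otimes C$ and $\Delta$ never meets the two off-diagonal blocks.

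For part (1), let $x\in G(H_8)$ and decompose $x=u+w$ with $u\in A$, $w\in C$. Since $\Delta(u)\in A\otimes A$ and $\Delta(w)\in C\otimes C$, comparing the $A\otimes C$- and $C\otimes A$-components of $\Delta(x)=x\otimes x=u\otimes u+u\otimes w+w\otimes u+w\otimes w$ forces $u\otimes w=w\otimes u=0$, whence $u=0$ or $w=0$. If $w=0$ then $x=u$ is a group-like of $k[V_4]$, and a standard coefficient comparison shows $G(k[V_4])=V_4$, producing exactly $1,g,h,gh$. It then remains to exclude a nonzero group-like $w\in C$: writing $w=\lambda_1z+\lambda_2gz+\lambda_3hz+\lambda_4ghz$ and comparing the coefficients of $z\otimes z$, $gz\otimes z$, $z\otimes hz$, $gz\otimes hz$ in $\Delta(w)=w\otimes w$ yields $\lambda_1^2=\tfrac12\lambda_1$, $\lambda_1\lambda_2=\tfrac12\lambda_1$, $\lambda_1\lambda_3=\tfrac12\lambda_1$ and $\lambda_2\lambda_3=-\tfrac12\lambda_1$, which are incompatible unless $\lambda_1=0$; the symmetric blocks coming from $\Delta(gz),\Delta(hz),\Delta(ghz)$ kill $\lambda_2,\lambda_3,\lambda_4$ in the same way. (Conceptually this just records that $C$ is the simple comatrix coalgebra attached to the unique $2$-dimensional irreducible of the semisimple Hopf algebra $H_8$, hence contains no group-like.) Thus $G(H_8)=\{1,g,h,gh\}$.

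For part (2), fix group-likes $a=g^ih^j$ and $b=g^mh^n$, both lying in $A$ by part (1), and let $x\in P_{a,b}(H_8)$, so $\Delta(x)=x\otimes a+b\otimes x$. Decomposing $x=u+w$ as before, the $C\otimes C$-component of this identity gives $\Delta(w)=0$, whence $w=(\varepsilon\otimes\mathrm{id})\Delta(w)=0$, so $x=u\in A$. Then $x$ is an $(a,b)$-skew-primitive inside $k[V_4]$, and the elementary coefficient argument for group algebras gives $P_{a,b}(k[V_4])=k(a-b)$: every $(a,b)$-primitive is a scalar multiple of $a-b$ (and is $0$ when $a=b$). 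This yields $P_{g^ih^j,g^mh^n}(H_8)=\alpha_{ijmn}(g^ih^j-g^mh^n)$, as claimed.

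The only genuinely non-formal step is establishing the decomposition $H_8=A\oplus C$ and the absence of group-likes in $C$; once these are in hand, both parts reduce to the well-known descriptions of group-likes and skew-primitives of a finite group algebra. I therefore expect the main obstacle to be the bookkeeping in computing $\Delta$ on the $z$-part and verifying that $C$ is a group-like-free simple subcoalgebra, rather than anything conceptually deep.
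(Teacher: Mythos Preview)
Your argument is correct and takes a genuinely different route from the paper. The paper proceeds by brute force: it writes a generic element $x=\sum f_{ij}g^ih^j+\sum e_{ij}g^ih^jz$, expands $\Delta(x)=x\otimes x$ (respectively $\Delta(x)=x\otimes g^sh^t+g^mh^n\otimes x$) in the $64$-element tensor basis, and extracts from the resulting system of quadratic (respectively linear) equations that the $e_{ij}$ all vanish and the $f_{ij}$ are forced. Your approach instead isolates the structural reason behind these vanishings: the coalgebra splitting $H_8=A\oplus C$ with $\Delta(A)\subseteq A\otimes A$ and $\Delta(C)\subseteq C\otimes C$ immediately kills all cross terms, so one is reduced to (i) the classical facts $G(k[V_4])=V_4$ and $P_{a,b}(k[V_4])=k(a-b)$, and (ii) the single verification that the $4$-dimensional block $C$ carries no group-like. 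Your observation that $C$ is the comatrix coalgebra attached to the unique $2$-dimensional irreducible of $H_8$ makes (ii) conceptually transparent, though the four-coefficient check you give is already enough.

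What each buys: the paper's computation is entirely self-contained and requires no prior identification of the coalgebra decomposition, at the cost of a longer list of equations (sixteen relations among the $e_{ij}$ alone in part~(1)). Your method is shorter and explains \emph{why} the answer looks the way it does, but presupposes the reader recognises---or is willing to verify---that $C$ is a subcoalgebra; once that is granted, both parts become almost immediate. A minor remark: in part~(2) you could have concluded $w=0$ already from the $A\otimes C$-component $b\otimes w=0$ (since $b\neq0$), without invoking $\Delta(w)=0$ and the counit axiom, but either works.
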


\begin{proof}
(1) Suppose that $x=\sum^1_{i,j=0}f_{ij}g^ih^j+\sum^1_{i,j=0}e_{ij}g^ih^jz$ is a group-like element of $H_8$. Then by $\Delta(x)=x\o x$, we have
\begin{align*}
&\sum^1_{i,j,m,n=0}f_{ij}f_{mn}g^ih^j\o g^mh^n+\sum^1_{i,j,m,n=0}e_{ij}f_{mn}g^ih^jz\o g^mh^n\\
&+\sum^1_{i,j,m,n=0}e_{mn}f_{ij}g^ih^j\o g^mh^nz+\sum^1_{i,j,m,n=0}e_{ij}e_{mn}g^ih^jz\o g^mh^nz\\
&=\sum^1_{i,j=0}f_{ij}g^ih^j\o g^ih^j+\sum^1_{i,j=0}e_{ij}(g^ih^j\o g^ih^j)J(z\o z).
\end{align*}
By comparison, we obtain that
$$\sum^1_{i,j,m,n=0}e_{ij}f_{m,n}g^ih^jz\o g^mh^n=0,$$
and
$$\sum^1_{i,j,m,n=0}e_{m,n}f_{ij}g^ih^j\o g^mh^nz=0.$$
Hence for all $0\leq i,j,m,n\leq1$, $e_{ij}f_{m,n}=0$. And
\begin{align*}
&f^2_{00}=f_{00},\ f^2_{10}=f_{10},\ f^2_{01}=f_{01},\ f^2_{11}=f_{11},\\
&f_{00}f_{01}=0,\ f_{00}f_{10}=0,\ f_{00}f_{11}=0,\ f_{10}f_{01}=0,\ f_{01}f_{11}=0,\ f_{10}f_{11}=0.
\end{align*}
From the above relations, we can see that $f_{ij}=0$ or $f_{ij}=1$, and if for some pair $i,j$ such that $f_{ij}=1$, the rest must be 0.

If $f_{ij}=0$ for $0\leq i,j\leq1$, since $z$ is invertible, we have
$$\sum^1_{i,j,m,n=0}e_{ij}e_{mn}g^ih^j\o g^mh^n=\sum^1_{i,j=0}e_{ij}(g^ih^j\o g^ih^j)J.$$
By comparison of the coefficients, we get the relations
\begin{align*}
&e_{00}=2e^2_{00},\ e_{00}=2e_{00}e_{10},\ e_{00}=2e_{00}e_{01},\ e_{00}=-2e_{10}e_{01},\\
&e_{10}=2e^2_{10},\ e_{10}=2e_{00}e_{10},\ e_{10}=2e_{10}e_{11},\ e_{10}=-2e_{00}e_{11},\\
&e_{01}=2e^2_{01},\ e_{01}=2e_{01}e_{11},\ e_{01}=2e_{01}e_{00},\ e_{01}=-2e_{11}e_{00},\\
&e_{11}=2e^2_{11},\ e_{11}=2e_{11}e_{01},\ e_{11}=2e_{11}e_{10},\ e_{11}=-2e_{10}e_{01}.
\end{align*}
We claim that $e_{00}=0$, for otherwise $e_{10}=e_{00}=e_{01}=\frac{1}{2}$, which contradicts $e_{00}=-2e_{10}e_{01}$. In the same manner, we obtain that $e_{01}=e_{10}=e_{11}=0$.

(2) Suppose that $x=\sum^1_{i,j=0}f_{ij}g^ih^j+\sum^1_{i,j=0}e_{ij}g^ih^jz$ is a $(g^sh^t,g^mh^n)$-primitive element of $H_8$. We have
\begin{align*}
&\sum^1_{i,j=0}f_{ij}g^ih^j\o g^sh^t+\sum^1_{i,j=0}f_{ij}g^mh^n\o g^ih^j+\sum^1_{i,j=0}e_{ij}g^ih^jz\o g^sh^t+\sum^1_{i,j=0}e_{ij}g^mh^n\o g^ih^jz\\
&=\sum^1_{i,j=0}f_{ij}g^ih^j\o g^ih^j+\sum^1_{i,j=0}e_{ij}(g^ih^j\o g^ih^j)J(z\o z).
\end{align*}
By comparison of the coefficients, $e_{ij}=0(0\leq i,j\leq 1)$, and $f_{ij}=0$ except $f_{st},f_{mn}$ with $f_{st}=-f_{mn}$. Therefore $x=f_{st}(g^sh^t-g^mh^n)$.
The proof is completed.
\end{proof}

Now we will analyze the actions of $G$ on the basis $(1,g,h,gh,z,gz,hz,ghz)$.
It is obvious that elements $g\tl G,h\tl G$ and $gh\tl G$ are all group-like elements of $H_8$. Firstly $g\tl G\neq1$, for otherwise $1=g\tl G^2=g$. Also $h\tl G\neq1,gh\tl G\neq1$. Now we will list all possible cases:
\begin{itemize}
  \item If $g\tl G=g$, then $h\tl G\neq g,$ for otherwise $h=h\tl G^2=g\tl G=g$. On one hand assume that $h\tl G=h$, since $(gh\tl G)\tl G=gh$, we obtain $gh\tl G=gh$. On the other hand, if $h\tl G=gh$, then $gh\tl G=h\tl G^2=h$.
  \item If $g\tl G=h$, then $h\tl G=g$ and $gh\tl G=gh$.
  \item If $g\tl G=gh$, then $gh\tl G=g$ and $h\tl G=h.$
\end{itemize}
Assume that
$$z\tl G=\sum^1_{i,j=0}f_{ij}g^ih^j+\sum^1_{i,j=0}e_{ij}g^ih^jz,$$
for $f_{ij},e_{ij}\in k,0\leq i,j\leq1$. Then we have
$$z=\sum^1_{i,j=0}(f_{ij}g^ih^j\tl G)+\sum^1_{i,j=0}(e_{ij}g^ih^jz\tl G).$$
Clearly $f_{ij}=0(0\leq i,j\leq1)$, and $z$ could be linearly represented by $z\tl G,gz\tl G,hz\tl G,ghz\tl G$ which are linearly independent. So are $gz,hz,ghz$. Therefore there exists an invertible matrix $A=(a_{ij})=(\a_1,\a_2,\a_3,\a_4)$ of order 4 such that
$$(z\tl G,gz\tl G,hz\tl G,ghz\tl G)=(z,gz,hz,ghz)A.$$
Since $G^2=1$, we get $A^2=E$. For $z\tl G=a_{11}z+a_{21}gz+a_{31}hz+a_{41}ghz$, on one hand
\begin{align*}
&\Delta(z\tl G)\\
&=a_{11}J(z\o z)+a_{21}(g\o g)J(z\o z)+a_{31}(h\o h)J(z\o z)+a_{41}(gh\o gh)J(z\o z)\\
&=\frac{a_{11}}{2}(z\o z+gz\o z+z\o hz-gz\o hz)\\
&+\frac{a_{21}}{2}(gz\o gz+z\o gz+gz\o ghz-z\o ghz)\\
&+\frac{a_{31}}{2}(hz\o hz+ghz\o hz+hz\o z-ghz\o z)\\
&+\frac{a_{41}}{2}(ghz\o ghz+hz\o ghz+ghz\o gz-hz\o gz)\\
=&[(z,gz,hz,ghz)\o(z,gz,hz,ghz)]\cdot\\
&\frac{1}{2}(a_{11},a_{21},a_{11},-a_{21},a_{11},a_{21},-a_{11},a_{21},a_{31},-a_{41},a_{31},a_{41},-a_{31},a_{41},a_{31},a_{41})^T.
\end{align*}
On the other hand
\begin{align*}
&\Delta(z\tl G)\\
&=\frac{1}{2}(z\tl G\o z\tl G+gz\tl G\o z\tl G+z\tl G\o hz\tl G-gz\tl G\o hz\tl G)\\
&=[(z,gz,hz,ghz)\o(z,gz,hz,ghz)]\frac{1}{2}[\a_1\o \a_1+\a_2\o\a_1+\a_1\o\a_3-\a_2\o\a_3].
\end{align*}
Hence we have the relation
\begin{align}\label{D}
&(a_{11},a_{21},a_{11},-a_{21},a_{11},a_{21},-a_{11},a_{21},a_{31},-a_{41},a_{31},a_{41},-a_{31},a_{41},a_{31},a_{41})^T\nonumber\\
&=\a_1\o \a_1+\a_2\o\a_1+\a_1\o\a_3-\a_2\o\a_3.
\end{align}
Similarly for $gz\tl G,hz\tl G,ghz\tl G,$ we have the relations
\begin{align}
&(a_{12},a_{22},a_{12},-a_{22},a_{12},a_{22},-a_{12},a_{22},a_{32},-a_{42},a_{32},a_{42},-a_{32},a_{42},a_{32},a_{42})^T\nonumber\\
&=\a_2\o \a_2+\a_1\o\a_2+\a_2\o\a_4-\a_1\o\a_4,\label{E}\\
&(a_{13},a_{23},a_{13},-a_{23},a_{13},a_{23},-a_{13},a_{23},a_{33},-a_{43},a_{33},a_{43},-a_{33},a_{43},a_{33},a_{43})^T\nonumber\\
&=\a_3\o \a_3+\a_4\o\a_3+\a_3\o\a_1-\a_4\o\a_1,\label{F}\\
&(a_{14},a_{24},a_{14},-a_{24},a_{14},a_{24},-a_{14},a_{24},a_{34},-a_{44},a_{34},a_{44},-a_{34},a_{44},a_{34},a_{44})^T\nonumber\\
&=\a_4\o \a_4+\a_3\o\a_4+\a_4\o\a_2-\a_3\o\a_2\label{G}.
\end{align}

\begin{lemma}
In order to make $H_8$ be a right $H_4$ module coalgebra, the matrix $A$ associated to the actions of $G$ on the basis $(z,gz,hz,ghz)$ satisfies the conditions $A^2=E$ and the identities (\ref{D})--(\ref{G}).
\end{lemma}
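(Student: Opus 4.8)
The plan is to read off the two asserted conditions on $A$ as the exact translations, specialized to the group-like generator $G\in H_4$, of the two defining requirements of a right $H_4$-module coalgebra: the module axioms $x\tl 1=x$ and $(x\tl a)\tl b=x\tl(ab)$, and the coalgebra-morphism axioms $\D(x\tl a)=(x_1\tl a_1)\o(x_2\tl a_2)$ and $\v(x\tl a)=\v(x)\v(a)$. Since $\D(G)=G\o G$ and $G^2=1$ in $H_4$, both families of axioms collapse to statements about the single operator ``$\tl G$'' acting on $H_8$.

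For the condition $A^2=E$ I would argue as follows. As already seen, $z\tl G$ has vanishing group-like component, and the same holds for $gz\tl G,hz\tl G,ghz\tl G$; hence the subspace $V=\mathrm{span}\{z,gz,hz,ghz\}$ is stable under $\tl G$, and the restriction is encoded by $A$ through $(z\tl G,gz\tl G,hz\tl G,ghz\tl G)=(z,gz,hz,ghz)A$. The module axioms give $(x\tl G)\tl G=x\tl G^2=x\tl 1=x$ for every $x$, so on $V$ the square of the operator $\tl G$ is the identity; in matrix terms this is precisely $A^2=E$, which also re-proves that $A$ is invertible with $A^{-1}=A$.

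For the identities (\ref{D})--(\ref{G}) I would compute $\D(x\tl G)$ in two ways for each $x\in\{z,gz,hz,ghz\}$ and equate them. Computing directly from the coalgebra structure of $H_8$: writing $x\tl G$ in the basis via the appropriate column of $A$ and applying $\D$ (an algebra map, with $\D(z)=J(z\o z)$ and $J=\frac12(1\o1+g\o1+1\o h-g\o h)$) yields the explicit length-$16$ coefficient vector on the left-hand side of each identity. Computing via the coalgebra-morphism axiom: since $\D(z)=\frac12(z\o z+gz\o z+z\o hz-gz\o hz)$, one gets $\D(z\tl G)=(z_1\tl G)\o(z_2\tl G)=\frac12(\a_1\o\a_1+\a_2\o\a_1+\a_1\o\a_3-\a_2\o\a_3)$ in terms of the columns $\a_1,\ldots,\a_4$ of $A$, together with the analogous expressions for $gz,hz,ghz$ obtained by left-multiplying $\D(z)$ by $g,h,gh$. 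Matching the two computations produces exactly (\ref{D}) for $z$ and (\ref{E}), (\ref{F}), (\ref{G}) for $gz,hz,ghz$; the counit axiom $\v(x\tl G)=\v(x)$ is automatically satisfied and adds nothing.

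The main obstacle is organizational rather than conceptual: in the last step one must push the twisted comultiplication of $z$ through the products $gz,hz,ghz$ and keep all sixteen tensor slots of both sides expressed in the same ordered basis $\{z,gz,hz,ghz\}^{\o 2}$, so that the direct coefficient vectors and the column-product expressions are compared consistently. Once this alignment is fixed, $A^2=E$ together with (\ref{D})--(\ref{G}) follows immediately as the necessary conditions on $A$.
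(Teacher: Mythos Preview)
Your proposal is correct and follows essentially the same approach as the paper: the paper derives $A^2=E$ directly from $G^2=1$ and obtains (\ref{D})--(\ref{G}) by computing $\Delta(x\tl G)$ in two ways for $x\in\{z,gz,hz,ghz\}$, exactly as you outline. The lemma in the paper is really a summary of the preceding computations rather than a statement with a separate proof, so your write-up is in fact more explicit about the structure of the argument.
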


\begin{example}
(1) Let $A=E$, then clearly $A$ satisfies the relations (\ref{D})-(\ref{G}).

(2) Let $A$ be the matrix
\begin{equation*}
\left(
  \begin{array}{cccc}

    0 & 0 & 0 & 1\\
    0 & 0 & 1 & 0\\
    0 & 1 & 0 & 0\\
    1 & 0 & 0 & 0
  \end{array}
\right).
\end{equation*}
That is, $z\tl G=ghz,\ gz\tl G=hz,\ hz\tl G=gz,\ ghz\tl G=z$. By a long and tedious verification, $A^2=E$ and satisfies the relations  (\ref{D})--(\ref{G}).
\end{example}

Next we will consider the actions of $X$ on the basis of $H_8$. Since
\begin{align*}
&\Delta(g\tl X)=g\tl X\o g\tl G+g\o g\tl X,\\
&\Delta(h\tl X)=h\tl X\o h\tl G+h\o h\tl X,\\
&\Delta(gh\tl X)=gh\tl X\o gh\tl G+gh\o gh\tl X,
\end{align*}
we need to consider all the cases for the actions of $G$ on $g,h,gh$.
\begin{itemize}
  \item When $g\tl G=g,h\tl G=h,gh\tl G=gh$, then $g\tl X=h\tl X=gh\tl X=0$.
  \item When $g\tl G=g,h\tl G=gh,gh\tl G=h$, then $g\tl X=0$, and
  $$h\tl X=\a(h-gh),\ gh\tl X=\b(h-gh).$$
  Moreover $GX=-XG,X^2=0$ implies $\a=\b$.
  \item When $g\tl G=h,h\tl G=g,gh\tl G=gh$, then $gh\tl X=0$, and
  $$g\tl X=\a(g-h),\ h\tl X=\a(g-h).$$
  \item When $g\tl G=gh,h\tl G=h,gh\tl G=g$, then $h\tl X=0$, and
  $$g\tl X=\a(g-gh),\ gh\tl X=\a(g-gh).$$
\end{itemize}

Suppose that $z\tl X=\sum^1_{i,j=0}f_{ij}g^ih^j+\sum^1_{i,j=0}e_{ij}g^ih^jz$.
\begin{align*}
&\sum^1_{i,j,m,n=0}f_{ij}f_{mn}g^ih^j\o g^mh^n+\sum^1_{i,j,m,n=0}e_{ij}f_{mn}g^ih^jz\o g^mh^n\\
&+\sum^1_{i,j,m,n=0}e_{mn}f_{ij}g^ih^j\o g^mh^nz+\sum^1_{i,j,m,n=0}e_{ij}e_{mn}g^ih^jz\o g^mh^nz\\
&=\frac{1}{2}[z\tl X\o z\tl G+gz\tl X\o z\tl G+z\tl X\o hz\tl G-gz\tl X\o hz\tl G\\
&+z\o z\tl X+gz\o z\tl X+z\o hz\tl X-gz\o hz\tl X].
\end{align*}
Because the items like $g^ih^j\o g^mh^n$ will not appear in the right side, $f_{ij}=0$ for all $0\leq i,j\leq1$. Thus $z\tl X$ is a linear combination of $z,gz,hz,ghz$; so are $gz\tl X,hz\tl X,ghz\tl X$. Therefore there exists a matrix $B=(\b_1,\b_2,\b_3,\b_4)$ of order 4 such that
$$(z\tl X,gz\tl X,hz\tl X,ghz\tl X)=(z,gz,hz,ghz)B.$$
The relations $GX=-XG,X^2=0$ implies $AB=-BA,B^2=0$ respectively.

Now we will consider $\Delta(z\tl X),\Delta(gz\tl X),\Delta(hz\tl X)$ and $\Delta(ghz\tl X)$. On one hand,
\begin{align*}
&\Delta(z\tl X)\\
&=b_{11}J(z\o z)+b_{21}(g\o g)J(z\o z)+b_{31}(h\o h)J(z\o z)+b_{41}(gh\o gh)J(z\o z)\\
&=\frac{b_{11}}{2}(z\o z+gz\o z+z\o hz-gz\o hz)\\
&+\frac{b_{21}}{2}(gz\o gz+z\o gz+gz\o ghz-z\o ghz)\\
&+\frac{b_{31}}{2}(hz\o hz+ghz\o hz+hz\o z-ghz\o z)\\
&+\frac{b_{41}}{2}(ghz\o ghz+hz\o ghz+ghz\o gz-hz\o gz)\\
=&[(z,gz,hz,ghz)\o(z,gz,hz,ghz)]\cdot\\
&\frac{1}{2}(b_{11},b_{21},b_{11},-b_{21},b_{11},b_{21},-b_{11},b_{21},b_{31},-b_{41},b_{31},b_{41},-b_{31},b_{41},b_{31},b_{41})^T.
\end{align*}
On the other hand,
\begin{align*}
&\Delta(z\tl X)\\
&=\frac{1}{2}[z\tl X\o z\tl G+gz\tl X\o z\tl G+z\tl X\o hz\tl G-gz\tl X\o hz\tl G\\
&+z\o z\tl X+gz\o z\tl X+z\o hz\tl X-gz\o hz\tl X]\\
&=[(z,gz,hz,ghz)\o(z,gz,hz,ghz)]\cdot\\
&\frac{1}{2}[\b_1\o\a_1+\b_2\o\a_1+\b_1\o\a_3-\b_2\o\a_3+e_1\o\b_1+e_2\o\b_1+e_1\o\b_3-e_2\o\b_3].
\end{align*}
Therefore
\begin{align}
&(b_{11},b_{21},b_{11},-b_{21},b_{11},b_{21},-b_{11},b_{21},b_{31},-b_{41},b_{31},b_{41},-b_{31},b_{41},b_{31},b_{41})^T\nonumber\\
&=\b_1\o\a_1+\b_2\o\a_1+\b_1\o\a_3-\b_2\o\a_3+e_1\o\b_1+e_2\o\b_1+e_1\o\b_3-e_2\o\b_3\label{H},\\
&(b_{12},b_{22},b_{12},-b_{22},b_{12},b_{22},-b_{12},b_{22},b_{32},-b_{42},b_{32},b_{42},-b_{32},b_{42},b_{32},b_{42})^T\nonumber\\
&=\b_2\o \a_2+\b_1\o\a_2+\b_2\o\a_4-\b_1\o\a_4+e_2\o\b_2+e_1\o\b_2+e_2\o\b_4-e_1\o\b_4,\label{I}\\
&(b_{13},b_{23},b_{13},-b_{23},b_{13},b_{23},-b_{13},b_{23},b_{33},-b_{43},b_{33},b_{43},-b_{33},b_{43},b_{33},b_{43})^T\nonumber\\
&=\b_3\o \a_3+\b_4\o\a_3+\b_3\o\a_1-\b_4\o\a_1+e_3\o\b_3+e_4\o\b_3+e_3\o\b_1-e_4\o\b_1,\label{J}\\
&(b_{14},b_{24},b_{14},-b_{24},b_{14},b_{24},-b_{14},b_{24},b_{34},-b_{44},b_{34},b_{44},-b_{34},b_{44},b_{34},b_{44})^T\nonumber\\
&=\b_4\o \a_4+\b_3\o\a_4+\b_4\o\a_2-\b_3\o\a_2+e_4\o\b_4+e_3\o\b_4+e_4\o\b_2-e_3\o\b_2.\label{K}
\end{align}

\begin{lemma}
In order to make $H_8$ be a right $H_4$ module coalgebra, the matrix $B$ associated to the actions of $X$ on the basis $(z,gz,hz,ghz)$ satisfies the conditions $B^2=0$ and the identities (\ref{H})--(\ref{K}).
\end{lemma}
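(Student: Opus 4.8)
The plan is to mirror the structure already used for the matrix $A$ in the previous lemma, since the setup for $X$ acting on the $z$-sector is completely parallel. First I would establish that $z\tl X$, and likewise $gz\tl X, hz\tl X, ghz\tl X$, must lie in the span of $\{z,gz,hz,ghz\}$. This is already done in the paragraph preceding the statement: expanding $\Delta(z\tl X)$ using the comultiplication of $H_8$ and comparing with the module-coalgebra compatibility forces every $f_{ij}=0$, because terms of the form $g^ih^j\o g^mh^n$ (i.e. the purely group-like tensor components) cannot appear on the right-hand side. Hence the coefficient matrix $B=(b_{ij})=(\b_1,\b_2,\b_3,\b_4)$ is well-defined by the relation $(z\tl X,gz\tl X,hz\tl X,ghz\tl X)=(z,gz,hz,ghz)B$, and this is the object whose constraints I must pin down.

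Next I would extract the two algebraic identities. The defining relations of $H_4$ are $G^2=1$, $X^2=0$, and $XG=-GX$. Since $\tl$ is a right action, applying $X$ twice to each basis vector of the $z$-sector and using $X^2=0$ gives $B^2=0$; applying $G$ then $X$ versus $X$ then $G$ and using $XG=-GX$ together with $A$ being the matrix of the $G$-action gives $AB=-BA$. These are exactly the relations recorded in the text ($AB=-BA$, $B^2=0$), so I would simply invoke the compatibility of $\tl$ with the $H_4$-relations to obtain them. The condition $B^2=0$ is the one singled out in the lemma statement, and it is the cleaner of the two to verify directly.

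The heart of the proof is the coalgebra condition, which produces the four identities (\ref{H})--(\ref{K}). For each basis vector I would compute $\Delta(z\tl X)$ in two ways. On one side I use $\Delta(z\tl X)=\Delta\bigl((z,gz,hz,ghz)\,\b_1\bigr)$ together with $\Delta(g^ih^jz)=(g^ih^j\o g^ih^j)J(z\o z)$, which yields the explicit sixteen-entry column vector on the left of (\ref{H}). On the other side I apply the requirement that $\tl$ be a coalgebra map, so that $\Delta(z\tl X)=\sum (z\tl X)_1\o(z\tl X)_2$ is computed from $\Delta(z)=J(z\o z)$ and $\Delta(X)=X\o G+1\o X$; pushing the action through each tensor factor and re-expressing everything in the basis $(z,gz,hz,ghz)\o(z,gz,hz,ghz)$ gives the right-hand side involving $\b_i\o\a_j$ and $e_i\o\b_j$. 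Here $e_i$ denotes the $i$-th column of the identity, encoding the trivial action of $1$ on $z$. Matching the two expressions yields (\ref{H}), and the analogous computations for $gz\tl X, hz\tl X, ghz\tl X$ yield (\ref{I})--(\ref{K}). The main obstacle is purely bookkeeping: correctly tracking how $g,h,gh$ permute the four basis vectors $z,gz,hz,ghz$ under left multiplication inside $J$, so that the coproduct terms are reorganized into the same ordered basis of the tensor square on both sides. Once the indexing is handled consistently, the identification of coefficients is immediate and the lemma follows by collecting $B^2=0$ together with (\ref{H})--(\ref{K}).
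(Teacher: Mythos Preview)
Your proposal is correct and follows essentially the same approach as the paper: the derivation of $B^2=0$ from $X^2=0$ (and $AB=-BA$ from $XG=-GX$), together with the two-way computation of $\Delta(g^ih^jz\tl X)$ yielding (\ref{H})--(\ref{K}), is exactly what the paper does in the paragraphs immediately preceding the lemma. The lemma itself is just a summary of those computations, so there is nothing further to add.
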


\section{The matched pairs between $H_8$ and $H_4$}
\def\theequation{5.\arabic{equation}}
\setcounter{equation} {0} \hskip\parindent

In this section, we will find the suitable actions which make $H_8$ and $H_4$ matched pairs.
We firstly check the relation $(2.4)$ for the pairs $(g,X),(h,X),(gh,X)$, which should satisfy the identities
\begin{align*}
&g\tl X\o G+g\o g\tr X=g\tl X \o 1 + g\tl G\o g\tr X,\\
&h\tl X\o G+h\o h\tr X=h\tl X \o 1 + h\tl G\o h\tr X,\\
&gh\tl X\o G+gh\o gh\tr X=gh\tl X \o 1 + gh\tl G\o gh\tr X.
\end{align*}
When only $g\tl G=g,h\tl G=h,gh\tl G=gh$, the above identities hold. At this moment, $g\tl X=h\tl X=gh\tl X=0$.

\begin{align*}
1=&z\tr G^2=\frac{1}{2}[(z\tr G)((z\tl G)\tr G)+(gz\tr G)((z\tl G)\tr G)\\
&+(z\tr G)((hz\tl G)\tr G)-(gz\tr G)((hz\tl G)\tr G)]\\
&=G((z\tl G)\tr G)\\
&=a_{11}+a_{21}+a_{31}+a_{41}.
\end{align*}

For the pairs $(g^ih^j,G),(g^ih^jz,G)$ the relation (2.4) is trivial.

On one hand
$$z^2\tl G=\frac{1}{2}(1+g+h-gh)\tl G=z^2.$$
On the other hand by relation (2.3)
\begin{align*}
z^2\tl G&=\frac{1}{2}[(z\tl (z\tr G))(z\tl G)+(z\tl (gz\tr G))(z\tl G)\\
&+(z\tl (z\tr G))(hz\tl G)-(z\tl (gz\tr G))(hz\tl G)]\\
&=(z\tl  G)(z\tl G)\\
&=(a_{11}z+a_{21}gz+a_{31}hz+a_{41}ghz)^2.
\end{align*}
By the comparison of coefficients, we have
\begin{equation}\label{L}
\left\{
\begin{aligned}
&a^2_{11}+2a_{21}a_{31}+a^2_{41}=1,\\
&(a_{11}+a_{41})(a_{21}+a_{31})=0,\\
&a^2_{21}+2a_{11}a_{41}+a^2_{31}=0.
\end{aligned}
\right.
\end{equation}

\begin{align*}
gz\tl G&=(g\tl G)(z\tl G)=a_{11}gz+a_{21}z+a_{31}ghz+a_{41}hz\\
&=(z\tl G)h=zh\tl G,
\end{align*}
which is compatible with $gz=zh$.
Thus
$$a_{11}gz+a_{21}z+a_{31}ghz+a_{41}hz=a_{12}z+a_{22}gz+a_{32}hz+a_{42}ghz.$$
Similarly, we have
\begin{itemize}
  \item $hz\tl G=zg\tl G$, $a_{11}hz+a_{21}ghz+a_{31}z+a_{41}gz=a_{13}z+a_{23}gz+a_{33}hz+a_{43}ghz$.
  \item $ghz\tl G=zgh\tl G$, $a_{11}ghz+a_{21}hz+a_{31}gz+a_{41}z=a_{14}z+a_{24}gz+a_{34}hz+a_{44}ghz$.
\end{itemize}
So we get
\begin{align*}
&a_{11}=a_{22}=a_{33}=a_{44},\ a_{21}=a_{12}=a_{43}=a_{34},\\
&a_{31}=a_{42}=a_{13}=a_{24},\ a_{41}=a_{32}=a_{23}=a_{14}.
\end{align*}
Therefore the matrix $A$ has the following form
\begin{equation*}
\left(
  \begin{array}{cccc}

    a & b & c & d\\
    b & a & d & c\\
    c & d & a & b\\
    d & c & b & a
  \end{array}
\right),
\end{equation*}
where $a,b,c,d\in k$. Since $A^2=E$, plus the relation (\ref{L}), we have the following equation set
\begin{equation*}
\left\{
  \begin{aligned}
&a+b+c+d=1,\\
&a^2+b^2+c^2+d^2=1,\\
&ac+bd=0,\\
&ab+cd=0,\\
&ad+bc=0,\\
&a^2+2bc+d^2=1,\\
&(a+d)(b+c)=0,\\
&b^2+2ad+c^2=0.
  \end{aligned}
\right.
\end{equation*}
We obtain four solutions for the above equation set:
\begin{align*}
&(1)\ a=\frac{1}{2},\ b=\frac{1}{2},\ c=\frac{1}{2},\ d=-\frac{1}{2}.\\
&(2)\ a=-\frac{1}{2},\ b=\frac{1}{2},\ c=\frac{1}{2},\ d=\frac{1}{2}.\\
&(3)\ a=1,\ b=c=d=0.\\
&(4)\ a=b=c=0,\ d=1.
\end{align*}

Now we will verify the actions $g^ih^j\tr GX$ and $g^ih^jz\tr GX$.

For the action $\tr^1$
$$
g\tr GX=(g\tr G)((g\tl G)\tr X)=GX,
$$
and
$$
g\tr XG=(g\tr X)((g\tl G)\tr G)+(g\tr 1)((g\tl X)\tr G)=XG,
$$
which is compatible with the relation $XG=-GX$. Similarly $h\tr GX=gh\tr GX=GX$.

\begin{align*}
z\tr GX=&\frac{1}{2}[(z\tr G)((z\tl G)\tr X)+(gz\tr G)((z\tl G)\tr X)\\
&+(z\tr G)((hz\tl G)\tr X)-(gz\tr G)((hz\tl G)\tr X)]\\
=&\frac{1}{2}G[(z\tl G)\tr X+(z\tl G)\tr X
+(hz\tl G)\tr X-(hz\tl G)\tr X]\\
=&G((z\tl G)\tr X)\\
=&(a_{11}+a_{21}+a_{31}+a_{41})GX\\
=&GX,
\end{align*}
and
\begin{align*}
&z\tr XG\\
&=\frac{1}{2}[(z\tr X)((z\tl G)\tr G)+((z\tl X)\tr G)+(gz\tr X)((z\tl G)\tr G)+((z\tl X)\tr G)\\
&+(z\tr X)((hz\tl G)\tr G)+((hz\tl X)\tr G)-(gz\tr X)((hz\tl G)\tr G)-((hz\tl X)\tr G)]\\
&=X((z\tl G)\tr G)+(z\tl X)\tr G\\
&=(a_{11}+a_{21}+a_{31}+a_{41})XG+(b_{11}+b_{21}+b_{31}+b_{41})G\\
&=XG+(b_{11}+b_{21}+b_{31}+b_{41})G.
\end{align*}
Hence we obtain $b_{11}+b_{21}+b_{31}+b_{41}=0$. That is, $z\tr GX=GX$.
$$
g^ih^j\tr X^2=(g^ih^j\tr X)((g^ih^j\tl G)\tr X)+(g^ih^j\tr 1)((g^ih^j\tl X)\tr X)=X^2=0,
$$

\begin{align*}
&z\tr X^2\\
&=\frac{1}{2}[(z\tr X)((z\tl G)\tr X)+((z\tl X)\tr X)+(gz\tr X)((z\tl G)\tr X)+((z\tl X)\tr X)\\
&+(z\tr X)((hz\tl G)\tr X)+((hz\tl X)\tr X)-(gz\tr X)((hz\tl G)\tr X)-((hz\tl X)\tr X)]\\
&=\frac{1}{2}[X((z\tl G)\tr X)+(z\tl X)\tr X+X((z\tl G)\tr X)+(z\tl X)\tr X\\
&+X((hz\tl G)\tr X)+((hz\tl X)\tr X)-X((hz\tl G)\tr X)-((hz\tl X)\tr X)]\\
&=X((z\tl G)\tr X)+(z\tl X)\tr X\\
&=(b_{11}+b_{21}+b_{31}+b_{41})X\\
&=0,
\end{align*}
which naturally holds.

Since $z^2\tl X=\frac{1}{2}(1+g+h-gh)\tl X=0$,
\begin{align*}
z^2\tl X&=(z\tl X)(z\tl G)+z(z\tl X)\\
&=(b_{11}z+b_{21}gz+b_{31}hz+b_{41}ghz)(az+bgz+chz+dghz)\\
&+z(b_{11}z+b_{21}gz+b_{31}hz+b_{41}ghz)\\
&=0.
\end{align*}
When the solution of matrix $A$ is (1), by the comparison of coefficients, we have
\begin{equation}\label{M}
\left\{
  \begin{aligned}
&2b_{11}+b_{21}+b_{31}=0,\\
&b_{11}=b_{41}.
  \end{aligned}
\right.
\end{equation}

$gz\tl X=(g\tl X)(z\tl G)+g(z\tl X)=g(z\tl X)=(z\tl X)h=zh\tl X$, and
$$b_{12}z+b_{22}gz+b_{32}hz+b_{42}ghz=b_{11}gz+b_{21}z+b_{31}ghz+b_{41}hz.$$
Similarly it is straightforward to verify that
$hz\tl X=zg\tl X$, $ghz\tl X=zgh\tl X$, and we obtain the relations
\begin{align*}
&b_{13}z+b_{23}gz+b_{33}hz+b_{43}ghz=b_{11}hz+b_{21}ghz+b_{31}z+b_{41}gz,\\
&b_{14}z+b_{24}gz+b_{34}hz+b_{44}ghz=b_{11}ghz+b_{21}hz+b_{31}gz+b_{41}z.
\end{align*}
Hence we have
\begin{align*}
&b_{11}=b_{22}=b_{33}=b_{44},\ b_{21}=b_{12}=b_{43}=b_{34},\\
&b_{31}=b_{42}=b_{13}=b_{24},\ b_{41}=b_{14}=b_{23}=b_{32}.
\end{align*}
So the matrix $B$ has the following form
\begin{equation*}
\left(
  \begin{array}{cccc}

    p & q & r & s\\
    q & p & s & r\\
    r & s & p & q\\
    s & r & q & p
  \end{array}
\right),
\end{equation*}
where $p,q,r,s\in k$. Since $B^2=0$, plus the relation (\ref{M}), we have the following equation set
\begin{equation}\label{M}
\left\{
  \begin{aligned}
&2p+q+r=0,\\
&p=s,\\
&p^2+q^2+r^2+s^2=0,\\
&pq+rs=0,\\
&ps+qr=0.
  \end{aligned}
\right.
\end{equation}
Then $p=q=r=s=0$, that is, $B=0$.

Now when the pair $(z,X)$ satisfies the relation (2.4), we have
$$z\o X=z\tl G\o X.$$
The above identity implies $z\tl G=z$, which is a contradiction to our assumption.

By the same analysis, the second solutions of $A$ does not hold neither. When $A=E$, easy to see that $B=0$, and the relation (2.4) holds for the pair $(z,X)$. The Hopf algebras $(H_8,H_4)$ is a matches pair under the matrix $A=E,B=0$.

For the fourth solution of $A$, it is easy to see that $B=0$. However since $z\tl G=ghz$, the relation (2.4) is not valid for the pair $(z,X)$.

For action $\tr^2$, $g\tr GX=h\tr GX=gh\tr GX=GX.$
\begin{align*}
z\tr GX=&G((z\tl G)\tr X)\\
=&(a_{11}+a_{21}+a_{31}+a_{41})G(z\tr X)\\
=&\a(1-G)-GX,
\end{align*}
and
\begin{align*}
z\tr XG&=(z\tr X)((z\tl G)\tr G)+(z\tl X)\tr G\\
&=\a(1-G)-XG+(b_{11}+b_{21}+b_{31}+b_{41})G,
\end{align*}
Therefore $b_{11}+b_{21}+b_{31}+b_{41}=0$.
\begin{align*}
z\tr X^2
&=(z\tr X)((z\tl G)\tr X)+((z\tl X)\tr X)\\
&=(z\tr X)(z\tr X)+(z\tl X)\tr X\\
&=(z\tr X)^2+(b_{11}+b_{21}+b_{31}+b_{41})z\tr X\\
&=(z\tr X)^2\\
&=2\a^2(1-G)+2\a X.
\end{align*}
Since $X^2=0$, we have $\a=0$. That is, $z\tr X=-X$.

Whatever solutions of $A$, we can get $B=0$, and by the relation $(2.3)$, when $A=E$, $H_8$ and $H_4$ makes a matched pair.

For the action $\tr^3$,
$$
g\tr GX=(g\tr G)((g\tl G)\tr X)=G(\a(G-1)-X)=\a(1-G)-GX,
$$
and
$$
g\tr XG=(g\tr X)((g\tl G)\tr G)+(g\tr 1)((g\tl X)\tr G)=\a(1-G)-XG.
$$
Then $\a(1-G)-GX=-\a(1-G)-GX,$ which implies that $\a=0$ and
$$g\tr X=-X,\ h\tr X=-X,\ z\tr X=iX,\ g\tr GX=-GX.$$
Similarly we can get $h\tr GX=-GX$.
\begin{align*}
z\tr GX
=&G((z\tl G)\tr X)\\
=&G(az\tr X+bgz\tr X+chz\tr X+dghz\tr X)\\
=&G(ai X-bi X-ci X+di X)\\
=&(a-b-c+d)iGX,
\end{align*}
and
\begin{align*}
&z\tr XG\\
&=\frac{1}{2}[(z\tr X)((z\tl G)\tr G)+((z\tl X)\tr G)-(z\tr X)((z\tl G)\tr G)+((z\tl X)\tr G)\\
&+(z\tr X)((hz\tl G)\tr G)+((hz\tl X)\tr G)+(z\tr X)((hz\tl G)\tr G)-((hz\tl X)\tr G)]\\
&=(z\tl X)\tr G+(z\tr X)((hz\tl G)\tr G)\\
&=(b_{11}+b_{21}+b_{31}+b_{41})G+ (a+b+c+d)iXG.
\end{align*}
Therefore $b_{11}+b_{21}+b_{31}+b_{41}=0$, and $a+b+c+d=a-b-c+d$, which implies $b+c=0$. Hence we have $a=1,b=c=d=0$, or $a=b=c=0,d=1$.
Easy to check that $z\tr X^2=(z\tr X)((hz\tl G)\tr X)=0.$
$$
z^2\tl X=z(z\tl X)+i(z\tl X)(hz\tl G)=0.
$$
When $a=1,b=c=d=0$,
$$z(b_{11}z+b_{21}gz+b_{31}hz+b_{41}ghz)+i(b_{11}z+b_{21}gz+b_{31}hz+b_{41}ghz)hz=0,$$
which implies $b_{11}+b_{41}=0,b_{21}+b_{31}=0$. By the relation $(2.3)$
\begin{align*}
&g(z\tl X)=gz\tl X=zh\tl X=-(z\tl X)h,\\
&h(z\tl X)=hz\tl X=zg\tl X=-(z\tl X)g,\\
&gh(z\tl X)=ghz\tl X=zgh\tl X=(z\tl X)gh,
\end{align*}
we obtain that $B=0$. However the pair $(z,X)$ does not satisfy the relation $(2.4)$.

When $a=b=c=0,d=1$, we also get $B=0$, and it is routine to verify that all the pairs satisfy the relation $(2.4)$. Therefore $H_8$ and $H_4$ make a matched pair under the matrix $A$.

For the action $\tr^4$, by a similar computation, we have
\begin{align*}
&g\tr X=h\tr X=-X,\ z\tr X=-iX,\\
&g\tr GX=h\tr GX=-GX,\ z\tr GX=-iX,\\
&a=b=c=0,d=1.
\end{align*}
In summary, by direct computations we have the main result.
\begin{theorem}
A Hopf algebra $E$ factories through $H_8$ and $H_4$ if and only if
\begin{itemize}
  \item [(1)] $E\cong H_8\o H_4$.
  \item [(2)] $E\cong H_{32,1}$ subjecting to the relations:
\begin{align*}
&g^2=h^2=G^2=1,\ gh=hg,\ gz=zh,\ hz=zg,\\
&z^2=\frac{1}{2}(1+g+h-gh),\ X^2=0,\ GX=-XG,\\
&gG=Gg,\ hG=Gh,\ zG=Gz,\ gX=Xg,\ hX=Xh,\ zX=-Xz.
\end{align*}
  \item [(3)] $E\cong H_{32,2}$ subjecting to the relations:
\begin{align*}
&g^2=h^2=G^2=1,\ gh=hg,\ gz=zh,\ hz=zg,\\
&z^2=\frac{1}{2}(1+g+h-gh),\ X^2=0,\ GX=-XG,\\
&gG=Gg,\ hG=Gh,\ gzG=Ghz,\ gX=-Xg,\ hX=-Xh,\ zX=iXgz.
\end{align*}
  \item [(4)] $E\cong H_{32,3}$ subjecting to the relations:
\begin{align*}
&g^2=h^2=G^2=1,\ gh=hg,\ gz=zh,\ hz=zg,\\
&z^2=\frac{1}{2}(1+g+h-gh),\ X^2=0,\ GX=-XG,\\
&gG=Gg,\ hG=Gh,\ gzG=Ghz,\ gX=-Xg,\ hX=-Xh,\ zX=-iXgz.
\end{align*}
\end{itemize}
\end{theorem}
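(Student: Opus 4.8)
The plan is to invoke the fundamental correspondence of Majid \cite{Majid} and \cite{ABM}: a Hopf algebra $E$ factorizes through $H_8$ and $H_4$ if and only if $E$ is isomorphic to a bicrossed product $H_4\bowtie H_8$ attached to some matched pair $(H_4,H_8,\tr,\tl)$, with $\tr:H_8\o H_4\ra H_4$ and $\tl:H_8\o H_4\ra H_8$. The theorem then splits into two tasks: enumerate exactly those $(\tr,\tl)$ obeying the matched-pair axioms $(2.1)$--$(2.4)$, and for each surviving pair read off the presentation of $H_4\bowtie H_8$ from the multiplication $(a\bowtie x)(b\bowtie y)=a(x_1\tr b_1)\bowtie(x_2\tl b_2)y$, recognizing it as one of the four listed algebras. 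The ``if'' direction is then immediate, since each of the four is exhibited as such a bicrossed product and hence factorizes through its canonical copies of $H_8$ and $H_4$.

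To enumerate I would glue together the partial classifications already in hand. By the Proposition of Section 3 the action $\tr$ lies among the $16$ tabulated possibilities, assembled from $\tr^1,\dots,\tr^4$ (action on $X$) and $\tr^a,\dots,\tr^d$ (action on $GX$); by the Lemmas of Section 4 the action $\tl$ is carried by matrices $A$ (action of $G$) and $B$ (action of $X$) subject to $A^2=E$, $B^2=0$, $AB=-BA$, and the coalgebra identities (\ref{D})--(\ref{G}) and (\ref{H})--(\ref{K}). Crucially, axioms $(2.1)$--$(2.3)$ are already built into these module-coalgebra structures, so the effective sieve is the single cross-compatibility $(2.4)$.

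I would then run $(2.4)$ in four stages. \emph{First}, applied to $(g,X),(h,X),(gh,X)$ it forces $g\tl G=g,\ h\tl G=h,\ gh\tl G=gh$, and consequently $g\tl X=h\tl X=gh\tl X=0$. \emph{Second}, feeding $gz=zh,\ hz=zg$ and $z^2=\frac{1}{2}(1+g+h-gh)$ through $(2.3)$ pins $A$ down to a symmetric circulant-type matrix with entries $a,b,c,d$; combined with $A^2=E$ and (\ref{L}) this leaves exactly the four solutions $(a,b,c,d)$ displayed above. \emph{Third}, for each admissible $A$ and each of $\tr^1,\dots,\tr^4$ (the action on $GX$ being forced to agree through the relation $GX=-XG$), the identities (\ref{H})--(\ref{K}) together with $z\tr X^2=0$ and $z^2\tl X=0$ collapse the linear constraints on $B$ to $B=0$. \emph{Fourth}, the decisive test: verifying $(2.4)$ on the pair $(z,X)$. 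Here most candidates die --- e.g.\ whenever $z\tl G\neq z$ the identity $z\o X=z\tl G\o X$ fails outright --- and exactly four survive: $(\tr^1,A=E)$, $(\tr^2,A=E)$, and the two pairs with $\tr^3$, respectively $\tr^4$, paired with the fourth solution of $A$.

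Finally I would translate each survivor into generators and relations via the bicrossed multiplication: $(\tr^1,A=E,B=0)$ yields the trivial smash $E\cong H_8\o H_4$, the pair built on $\tr^2$ yields $H_{32,1}$ (with $zG=Gz$ and $zX=-Xz$), while those built on $\tr^3$ and $\tr^4$ yield $H_{32,2}$ and $H_{32,3}$ (with $gzG=Ghz$ and $zX=\pm iXgz$). I expect the fourth stage to be the true obstacle: because $z$ carries the twisted coproduct $\Delta(z)=J(z\o z)$, checking $(2.4)$ for $(z,X)$ means pushing the $J$-twist through four-term sums, and it is exactly this bookkeeping that discriminates the genuine matched pairs from the spurious ones and fixes the signs $\pm i$ distinguishing $H_{32,2}$ from $H_{32,3}$.
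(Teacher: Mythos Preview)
Your proposal tracks the paper's proof essentially step for step: classify the unital module-coalgebra actions (Sections~3 and~4), then cut them down via the matched-pair axioms in Section~5 to exactly the four survivors, and read off the bicrossed-product presentations. One correction, however: your sentence ``axioms $(2.1)$--$(2.3)$ are already built into these module-coalgebra structures, so the effective sieve is the single cross-compatibility $(2.4)$'' is false as stated --- axioms $(2.2)$ and $(2.3)$ genuinely couple $\tr$ with $\tl$ and are not consequences of either module-coalgebra structure alone. In practice this does no damage to your outline, since your stages~2 and~3 do invoke them (computing $z^2\tl G$ and $gz\tl G$ via $(2.3)$ to force the circulant form of $A$, and $z\tr XG$, $z\tr X^2$ via $(2.2)$ to force $\alpha=0$ and $b_{11}+b_{21}+b_{31}+b_{41}=0$); the paper makes this dependence explicit throughout Section~5, and it is precisely $(2.2)$--$(2.3)$, not only $(2.4)$, that eliminate most candidates before the final $(z,X)$ test.
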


\section*{Acknowledgments}

The authors are grateful to the anonymous referee for the thorough review, comments, and suggestions of this work. The research has been supported by the NSF of China(No. 11871301,11901240) and the NSF of Shandong Province (No. ZR2017PA001).

{\small
}

\end{document}